\title[Dirichlet's principle and Perron's method]{A direct proof of the equivalence between Dirichlet's principle and Perron's method}
\author{Tsogtgerel Gantumur}
\address{McGill University\\National University of Mongolia\\Institute of Mathematics and Digital Technology, MAS}
\email{gantumur.tsogtgerel@mcgill.ca}
\date{\today}
\theoremstyle{plain}
\newtheorem{theorem}{Theorem}[section]
\newtheorem{lemma}[theorem]{Lemma}
\theoremstyle{definition}
\newtheorem{remark}[theorem]{Remark}
\numberwithin{equation}{section}
\begin{document}

\begin{abstract}
We give a short proof that for a bounded domain $\Omega\subset\mathbb{R}^n$
and continuous boundary data $g\in C(\partial\Omega)$ admitting a continuous finite-energy extension $\phi\in H^{1}(\Omega)\cap C(\bar\Omega)$,
the minimizer of the Dirichlet energy
\[
  E(v) = \int_{\Omega} |\nabla v|^{2}\,dx,
  \qquad v-\phi\in H^{1}_{0}(\Omega),
\]
coincides with the Perron solution $h_g$
of the Dirichlet problem $\Delta u = 0$ in $\Omega$ with boundary data $g$.
The argument stays entirely in $H^{1}(\Omega)$ and uses only strong convergence via strict convexity
of the Dirichlet energy, Friedrichs' inequality, Weyl's lemma, and Wiener's
exhaustion by regular subdomains.  No weak convergence, Poisson problems with
distributional right hand sides, or general elliptic theory are needed.
\end{abstract}

\maketitle

\section{Introduction}

Let $\Omega\subset\mathbb{R}^{n}$ be a bounded domain and $g$ a function on
$\partial\Omega$.  The classical Dirichlet problem seeks a
harmonic function $u$ in $\Omega$ which takes the boundary values $g$.
Two standard constructions for the solution $u$ are:
\begin{itemize}
\item the \emph{Dirichlet principle}: minimize the Dirichlet energy
\[
E(v)=\int_{\Omega}|\nabla v|^{2}\,dx
\]
over an affine translate of $H^{1}_{0}(\Omega)$ determined by the boundary
values; and
\item \emph{Perron's method}: define $u$ as the upper envelope of subharmonic
functions dominated by $g$ along $\partial\Omega$.
\end{itemize}
Here $H^{1}(\Omega)$ denotes the usual Sobolev space, and $H^{1}_{0}(\Omega)$
is the closure of $C_{c}^{\infty}(\Omega)$, the space of smooth functions with
compact support in $\Omega$, in $H^{1}(\Omega)$.  
Very roughly, Perron's method proceeds by taking all subharmonic functions in $\Omega$
which do not exceed the boundary data $g$ in the sense of limiting boundary values and
then defining $h_g$ as the pointwise supremum of this class; when $h_g$ is finite and
harmonic, it is called the Perron solution associated with $g$.

The Dirichlet principle has its origins in the interpretation of electrostatic equilibrium as the configuration of least field energy, among all potentials with prescribed boundary values.
After initial popularization by Riemann and the later criticism of Weierstrass,
Hilbert put this picture on a rigorous footing via the direct method of calculus of variations, 
and the subsequent work of Sobolev and Weyl made the Hilbert–space setting of the Dirichlet principle completely transparent \cite{Hilbert1904,Sobolev1938,WeylProjection}.
On the other hand, a very flexible method based on subharmonic functions was developed by Poincar\'e under the name {\em balayage},
which inspired Lebesgue's notion of barriers, Perron’s envelope construction, and Wiener’s exhaustion by regular subdomains \cite{Poincare1890,Lebesgue1912,Perron1923,WienerDirichlet}.
These methods form the classical core on which modern potential theory is built, and as such their interrelationships are well understood.
In particular, for continuous boundary data $g$, balayage, Perron's method, and Wiener's exhaustion are known to produce the same harmonic function in $\Omega$.  It is therefore natural to ask whether the variational solution given by the Dirichlet principle also coincides with this classical potential–theoretic solution under comparable hypotheses; this is the question addressed in the present note.

From the point of view of modern potential theory, such an equivalence can already be obtained as a corollary of very general theories.  
This can be deduced, for instance, by invoking the full potential-theoretic machinery of Littman--Stampacchia--Weinberger together with Frostman’s 
Wiener criterion, 
or, in a still more general nonlinear setting, the axiomatic Perron theory of Heinonen–Kilpel\"ainen–Martio \cite{LSW,Frostman,HKM}.  
These approaches, however, rely on the full arsenal of modern potential theory and are considerably heavier than what is needed in the purely harmonic, finite-energy case considered here.

In recent decades there has been renewed interest in comparatively elementary,
Hilbert space based proofs of this equivalence.
Namely, for continuous boundary data admitting a continuous $H^{1}$-extension,
the results of Simader, Hildebrandt and
Arendt–Daners all identify the variational solution
with the classical Perron solution \cite{Simader,Hildebrandt,ArendtDaners}.  
Simader developed a general Sobolev framework for Perron's construction.
Hildebrandt subsequently gave a shorter argument in the classical setting by tracking the Dirichlet energy directly within Poincar\'e's balayage method
and by invoking weak convergence in $H^1$.  
Arendt and Daners then treated a strictly larger class of boundary data by assuming an extension
$\Phi\in C(\bar\Omega)$ with $\Delta\Phi\in H^{-1}(\Omega)$.
Restricted to continuous data
with a continuous finite energy extension, all three approaches ultimately prove
the same equivalence that is our concern here.

The contribution of the present note is that, in this finite energy setting, the
identification can be obtained by a much shorter argument which works
directly with the Dirichlet minimizer and Wiener's exhaustion, and uses only
strict convexity of the energy, Friedrichs' inequality, and Weyl's lemma.  
In particular, we do not need to adapt Poincar\'e's balayage or Perron's envelope to the Sobolev setting and we do not need weak convergence.  Nor do we need to introduce Poisson problems with right–hand sides in $H^{-1}(\Omega)$.
Wiener's exhaustion is used only in its classical form, and all Sobolev input
is contained in a very simple energy argument.
The resulting
proof could already have been available in the early Sobolev era.

The rest of the paper is organised as follows.  In Section~2 we state the main
theorem and recall the precise definition of Perron's solution.  Section~3
contains the proof, which combines a simple energy argument on an increasing
family of subspaces with Wiener's exhaustion by regular subdomains.  
We then end with a short concluding section.

\section{Main theorem}

Our main theorem is as follows.

\begin{theorem}\label{thm:main}
Let $\Omega \subset \mathbb{R}^n$ be a bounded domain and
let $g \in C(\partial\Omega)$ admit a continuous finite-energy extension
$\phi \in H^{1}(\Omega)\cap C(\bar\Omega)$.
Let $u \in H^{1}(\Omega)$ be the unique minimiser of
\[
  E(v) = \int_{\Omega} |\nabla v|^{2}\,dx,
  \qquad v - \phi \in H^{1}_{0}(\Omega).
\]
Let $h_g$ be the Perron solution with boundary data $g$. 
Then $u = h_g$ almost everywhere in $\Omega$.
\end{theorem}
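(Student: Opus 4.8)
The plan is to build a single sequence of harmonic functions that is at once the natural competitor sequence for the variational problem and the Wiener exhaustion sequence for $h_g$, and then to play the two convergence statements against each other. First I would fix an exhaustion $\Omega_1\Subset\Omega_2\Subset\cdots$ of $\Omega$ by smooth regular subdomains with $\bigcup_k\Omega_k=\Omega$. For each $k$, let $w_k$ be the harmonic function in $\Omega_k$ with boundary values $\phi|_{\partial\Omega_k}$, extended to all of $\Omega$ by setting $w_k=\phi$ on $\Omega\setminus\Omega_k$. Since $\phi\in H^{1}(\Omega)\cap C(\bar\Omega)$ and $\Omega_k$ is smooth, Weyl's lemma shows that the finite-energy minimiser of $E$ over $\{v:v-\phi\in H^{1}_{0}(\Omega_k)\}$ is harmonic, and smoothness of $\partial\Omega_k$ makes it continuous up to the boundary with values $\phi$; by uniqueness it is exactly the classical solution $w_k$. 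In particular $w_k-\phi\in H^{1}_{0}(\Omega_k)$, so after extension by zero $w_k-\phi\in H^{1}_{0}(\Omega)$ and $w_k$ is an admissible competitor for the global problem. Because $w_k=\phi$ outside $\Omega_k$, minimality on $\Omega_k$ makes $w_k$ the minimiser of $E$ over the whole affine space $\phi+H^{1}_{0}(\Omega_k)$.

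Next comes the energy argument. Every $\psi\in C_c^{\infty}(\Omega)$ is supported in some $\Omega_k$, so $\bigcup_k H^{1}_{0}(\Omega_k)$ is dense in $H^{1}_{0}(\Omega)$; since $v\mapsto E(v)$ is continuous in the $H^{1}$ norm, the minima $E(w_k)=\inf_{v-\phi\in H^{1}_{0}(\Omega_k)}E(v)$ decrease to $\inf_{v-\phi\in H^{1}_{0}(\Omega)}E(v)=E(u)$. The first variation of the global minimiser gives the orthogonality $\int_\Omega\nabla u\cdot\nabla\eta=0$ for every $\eta\in H^{1}_{0}(\Omega)$, and $w_k-u\in H^{1}_{0}(\Omega)$, so the quadratic structure of $E$ yields the Pythagorean identity
\[
  E(w_k)=E(u)+\int_\Omega|\nabla(w_k-u)|^{2}.
\]
Hence $\|\nabla(w_k-u)\|_{L^{2}(\Omega)}^{2}=E(w_k)-E(u)\to0$, and Friedrichs' inequality upgrades this to $w_k\to u$ strongly in $L^{2}(\Omega)$, indeed in $H^{1}(\Omega)$. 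This is precisely the strong convergence forced by strict convexity: convergence of the energy to its minimum value pins down the minimiser.

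Finally I would invoke Wiener's exhaustion in its classical form: for continuous boundary data, the classical harmonic solutions $w_k$ on the regular subdomains $\Omega_k$ converge locally uniformly in $\Omega$ to the Perron solution $h_g$. Combining the two convergences, a subsequence of $w_k$ tends to $u$ almost everywhere while the full sequence tends to $h_g$ pointwise, whence $u=h_g$ almost everywhere, the harmonic representative of $u$ supplied by Weyl's lemma being consistent with this identification.

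I expect the main obstacle to be the bridge established in the first step, namely the identification on each subdomain of the finite-energy variational solution with the classical solution carrying the continuous data $\phi|_{\partial\Omega_k}$. Everything downstream is a soft energy computation, but the whole argument rests on $w_k$ being simultaneously a legitimate $H^{1}$ competitor for the global minimisation and the genuine Wiener approximant to $h_g$. Choosing the exhausting domains smooth, so that every boundary point is regular and the finite-energy harmonic extension of continuous finite-energy data is attained continuously, is what reconciles these two roles and keeps the argument inside the elementary toolkit of Weyl's lemma, Friedrichs' inequality, and classical Wiener theory.
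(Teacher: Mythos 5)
Your proposal is correct and follows essentially the same route as the paper: exhaust $\Omega$ by regular subdomains, identify the Wiener approximants as the energy minimizers over the nested classes $\phi+H^1_0(\Omega_k)$, use density of $\bigcup_k H^1_0(\Omega_k)$ in $H^1_0(\Omega)$ to get $E(w_k)\downarrow E(u)$, upgrade to strong $H^1$ convergence via Friedrichs' inequality, and compare an a.e.\ convergent subsequence with the locally uniform Wiener convergence to $h_g$. The only cosmetic difference is that you obtain the norm convergence from the Euler--Lagrange orthogonality (a Pythagorean identity), whereas the paper uses the parallelogram identity together with minimality of $u$ at the midpoint $\tfrac12(u+u_k)$, thereby avoiding the first variation entirely; both are equally short for this quadratic energy.
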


\begin{remark}[Perron solution]
For completeness we recall the precise definition of Perron's
solution.  The upper Perron class for $g$ consists of all subharmonic functions
$v$ in $\Omega$ such that
\[
  \limsup_{\Omega\ni x\to\xi} v(x) \le g(\xi), \qquad \xi \in \partial\Omega.
\]
The Perron envelope is the pointwise supremum
\[
  h_g(x) := \sup\{v(x) : v \text{ in the upper Perron class}\}, \qquad x \in \Omega.
\]
Since $g$ is continuous on the bounded set $\partial\Omega$, the envelope $h_g$
is finite everywhere and harmonic in $\Omega$; $h_g$ is the Perron solution
with boundary data $g$.
\end{remark}

\begin{remark}[Interior regularity]
By Weyl's lemma, the weakly harmonic function $u$ coincides almost everywhere
with a smooth harmonic function in $\Omega$.  Replacing $u$ by this smooth
representative, we see that $u$ and $h_g$ are classical harmonic functions
which agree almost everywhere in $\Omega$, hence $u = h_g$ pointwise in $\Omega$.
\end{remark}

\begin{remark}[Boundary regularity]
Once the interior equality $u=h_g$ is known, the boundary behaviour of $u$
is determined by the classical potential theory for Perron solutions.
In particular, if $\xi\in\partial\Omega$ is Wiener regular, then
\[
  \lim_{\Omega\ni x\to\xi} u(x) = g(\xi).
\]
\end{remark}

\begin{remark}[Finite energy]
The assumption that the boundary data $g$ admits an extension $\phi\in H^{1}(\Omega)\cap C(\bar\Omega)$ encodes that the corresponding harmonic solution has finite Dirichlet integral.
It is well known (Prym-Hadamard phenomenon) that there exist continuous
boundary functions whose harmonic extensions have infinite energy, even on
the unit disc.  For such data a straightforward Dirichlet principle on
$H^{1}(\Omega)$ cannot even be formulated.
\end{remark}

\begin{remark}[Trace spaces]
On a regular domain (for example a bounded Lipschitz domain) the trace
operator maps $H^1(\Omega)$ continuously onto $H^{1/2}(\partial\Omega)$.
Thus a boundary function $g$ has some $H^1$-extension if and only if
$g\in H^{1/2}(\partial\Omega)$.  Our hypothesis is stronger: we require the
existence of a \emph{single} extension $\phi$ which is both in $H^1(\Omega)$
and continuous on $\bar\Omega$.  In general, it is not enough that $g$
separately admits an $H^1$-extension and a continuous extension; we need one
and the same function $\phi$ to have both properties.
\end{remark}

\section{Proof of the main theorem}

We keep the setting of Theorem~\ref{thm:main}.  
We will construct a sequence $\{u_k\}$ of functions in $\Omega$,
such that $u_k\to u$ in $H^1(\Omega)$, and $u_k\to h_g$ locally uniformly in $\Omega$.
This would establish that $u=h_g$ almost everywhere.

We consider a sequence of open sets $\Omega_1\subset\Omega_2\subset\ldots\subset\Omega$,
such that $\Omega=\bigcup_k\Omega_k$ and that each $\Omega_k$ is Wiener regular.
Such a sequence can be easily constructed, for example, by considering rectangular grids in $\mathbb{R}^n$ with mesh-width $2^{-k}$,
and taking $\Omega_k$ to be the domain generated by the mesh-cells that are contained in $\Omega$.
It is immediate by the exterior cone condition that those $\Omega_k$ are Wiener regular.
The dependence on the exterior cone condition can also be removed, if one prefers, by smoothing out the reentrant corners and edges of $\Omega_k$.
Then for each $k$, we define the function $u_k\in C(\Omega)$ to be the solution of 
\begin{equation}
\begin{cases}
\Delta u_k=0&\text{in}\ \Omega_k, \\
u_k=\phi&\text{on}\ \Omega\setminus\Omega_k.
\end{cases}
\end{equation}
By Wiener’s exhaustion theorem, $u_{k}$ converges locally uniformly in $\Omega$ to the Perron solution $h_{g}$.
Thus it only remains to show that $u_k$ converges to $u$ in $H^1(\Omega)$.

To this end, first observe that $u_k$ minimizes the energy integral $E$ over the set
\begin{equation}
\mathcal{A}_{k} =\{v\in H^1(\Omega): v-\phi\in H^1_0(\Omega_k)\} .
\end{equation}
Here we identify $H^1_0(\Omega_k)$ as a subspace of $H^1(\Omega)$, consisting of functions that vanish outside $\Omega_k$.
On the other hand, $u$ minimizes the energy integral $E$ over the set
\begin{equation}
\mathcal{A} =\{v\in H^1(\Omega): v-\phi\in H^1_0(\Omega)\} .
\end{equation}
It is clear by construction that $\mathcal{A}_k\subset\mathcal{A}_{k+1}\subset\mathcal{A}$ for each $k$.
Thus \(u\) is the global minimizer of \(E\) over the largest admissible class \(\mathcal{A}\),
while each \(u_k\) is the minimizer over the smaller class \(\mathcal{A}_k\) associated with
the inner domain \(\Omega_k\). In order to relate these local minimizers to the
global one, we first note that the sets \(\mathcal{A}_k\) exhaust \(\mathcal{A}\) in the energy
topology.

\begin{lemma}\label{lem:dense}
$\bigcup_{k}\mathcal{A}_{k}$ is dense in $\mathcal{A}$ as a subset of $H^1(\Omega)$.
\end{lemma}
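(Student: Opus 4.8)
The plan is to reduce the affine density statement to the standard fact that $C_c^\infty(\Omega)$ is dense in $H^1_0(\Omega)$. Since $\mathcal{A} = \phi + H^1_0(\Omega)$ and $\mathcal{A}_k = \phi + H^1_0(\Omega_k)$ are translates of linear subspaces by the common vector $\phi$, the map $v \mapsto v - \phi$ is an $H^1$-isometry carrying $\mathcal{A}$ onto $H^1_0(\Omega)$ and each $\mathcal{A}_k$ onto $H^1_0(\Omega_k)$. As density is preserved under isometries, it is equivalent to show that $\bigcup_k H^1_0(\Omega_k)$ is dense in $H^1_0(\Omega)$, where each $H^1_0(\Omega_k)$ is viewed, via extension by zero, as the subspace of functions in $H^1_0(\Omega)$ vanishing outside $\Omega_k$.

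For this reduced statement I would argue as follows. By definition $H^1_0(\Omega)$ is the closure of $C_c^\infty(\Omega)$ in $H^1(\Omega)$, so it suffices to check that every test function $w \in C_c^\infty(\Omega)$ already belongs to $\bigcup_k H^1_0(\Omega_k)$. Let $K = \operatorname{supp} w$, which is a compact subset of $\Omega$. The sets $\Omega_1 \subset \Omega_2 \subset \cdots$ are open and satisfy $\bigcup_k \Omega_k = \Omega \supset K$; since they are nested, the resulting open cover of the compact set $K$ reduces to a single member, so $K \subset \Omega_N$ for some $N$. Then $w \in C_c^\infty(\Omega_N) \subset H^1_0(\Omega_N)$, giving the desired inclusion. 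Hence $C_c^\infty(\Omega) \subset \bigcup_k H^1_0(\Omega_k) \subset H^1_0(\Omega)$, and since the outer sets share the same closure, so does the union in the middle. Translating back by $\phi$ yields the density of $\bigcup_k \mathcal{A}_k$ in $\mathcal{A}$.

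I do not expect a genuine obstacle here. The one step that uses anything beyond definitions is the passage from the pointwise exhaustion $\bigcup_k \Omega_k = \Omega$ to the containment of a single compact support $K$ in one $\Omega_N$; this is precisely where compactness of $\operatorname{supp} w$ together with the nestedness of the $\Omega_k$ enters, and it is elementary. The identification of $H^1_0(\Omega_k)$ as a subspace of $H^1_0(\Omega)$ also deserves a word: extension by zero across $\partial\Omega_k$ preserves both the $L^2$ norm of the function and that of its gradient, so it is an isometric embedding and causes no loss in the energy.
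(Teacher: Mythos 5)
Your proof is correct and is essentially the paper's argument: both reduce to approximating $w=v-\phi\in H^1_0(\Omega)$ by a test function $\varphi\in C_c^\infty(\Omega)$ and then using compactness of $\operatorname{supp}\varphi$ together with the nested exhaustion to place $\varphi$ in some $H^1_0(\Omega_N)$. The extra framing via the isometry $v\mapsto v-\phi$ and the remark on extension by zero are harmless elaborations of what the paper leaves implicit.
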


\begin{proof}
Given $v=\phi+w\in\mathcal{A}$ with
$w\in H^{1}_{0}(\Omega)$, approximate $w$ in $H^{1}$ by some
$\varphi\in C_{c}^{\infty}(\Omega)$. 
Then for $k$ large enough, $\operatorname{supp}
\varphi\subset\Omega_{k}$ and then $\phi+\varphi\in\mathcal{A}_{k}$ is close
to $v$ in $H^{1}(\Omega)$.
\end{proof}

This shows that every admissible competitor in \(\mathcal{A}\) can be
approximated in \(H^{1}(\Omega)\) by competitors confined to some \(\Omega_k\).
Since \(u_k\) minimizes the energy over \(\mathcal{A}_k\), this forces the minimal
energies \(E(u_k)\) to converge down to the global minimal value \(E(u)\).
The next lemma makes this precise and upgrades the energy convergence to
strong convergence in \(H^{1}(\Omega)\).

\begin{lemma}\label{lem:energy}
We have $E(u_{k})\to E(u)$ as $k\to\infty$, and
\begin{equation}
\|\nabla(u_{k}-u)\|_{L^{2}(\Omega)}\to0.
\end{equation}
In particular, $u_{k}\to u$ in $H^{1}(\Omega)$.
\end{lemma}

\begin{proof}
Since $\mathcal{A}_{k}\subset\mathcal{A}_{k+1}\subset\mathcal{A}$, we have
$E(u)\le E(u_{k+1})\le E(u_{k})$ and $E(u_{k})$ decreases
to some limit $L\ge E(u)$.  Density of $\bigcup_{k}\mathcal{A}_{k}$ in $\mathcal{A}$
implies
\begin{equation}
E(u)=\inf_{v\in\mathcal{A}}E(v)
   =\inf_{k}\inf_{v\in\mathcal{A}_{k}}E(v)
   =\inf_{k}E(u_{k}),
\end{equation}
and so $L=E(u)$.

For the gradient convergence we use strict convexity.
We have
\begin{equation}
E\Bigl(\frac{u-u_k}{2}\Bigr) =\frac{1}{2}E(u)+\frac{1}{2}E(u_k) - E\Bigl(\frac{u+u_k}{2}\Bigr) .
\end{equation}
Since $\frac12(u+u_{k})\in\mathcal{A}$, minimality of $u$ yields
\begin{equation}
E(u)\le E\Bigl(\frac{u+u_{k}}{2}\Bigr),
\end{equation}
meaning that
\begin{equation}
\|\nabla(u_{k}-u)\|_{L^{2}(\Omega)} = E\Bigl(\frac{u-u_k}{2}\Bigr) \le \frac12E(u_{k}) - \frac12E(u) \to 0 .
\end{equation}
As $u_{k}-u\in H^{1}_{0}(\Omega)$, Friedrichs' inequality yields
$\|u_{k}-u\|_{L^{2}(\Omega)}\to0$, so $u_{k}\to u$ in
$H^{1}(\Omega)$.
\end{proof}

Since $u_{k}\to u$ in $H^{1}(\Omega)$, along a subsequence we have $u_k(x)\to u(x)$ for almost every $x\in\Omega$.
On the other hand, by construction each $u_k$ is the classical harmonic solution
on $\Omega_k$ with boundary values $\phi$, and by Wiener's exhaustion theorem
$u_{k}\to h_g$ locally uniformly in $\Omega$.  
Thus $u$ and $h_g$ agree almost everywhere, 
and the proof of Theorem~\ref{thm:main} is complete.

\section{Conclusion}

We have given a short Hilbert space proof that, for continuous boundary data
with a continuous finite-energy extension, the Dirichlet minimizer coincides
with the classical Perron solution of the Dirichlet problem.
The argument stays entirely in $H^{1}(\Omega)$, uses only strict convexity
of the Dirichlet energy, Friedrichs' inequality, Weyl's lemma and Wiener's exhaustion by regular
subdomains, and never passes through Poisson problems with distributional
right-hand sides.  In particular, it shows that in the finite-energy class
the Dirichlet principle produces exactly the Wiener-exhaustion (and hence
Perron-Wiener) solution, giving 
a substantially shorter and more elementary proof, 
in the finite-energy class, 
than the approaches of Hildebrandt, Simader, and Arendt–Daners.

We finally note that, in much greater generality, the relationship between
variational solutions and Perron–Wiener–Brelot type solutions has been
developed in the framework of Dirichlet forms and elliptic operators without
the maximum principle; see for instance \cite{Klimsiak,ArendtElstSauter}.
Interestingly, in the classical Laplace case, 
the work \cite{ArendtElstSauter} implies that every continuous boundary function
$g\in C(\partial\Omega)$ admits a continuous extension $\Phi\in C(\bar\Omega)$
with $\Delta\Phi\in H^{-1}(\Omega)$.  Thus, combined with \cite{ArendtDaners},
one obtains that the weak solution considered there coincides
with the Perron solution for arbitrary continuous boundary data.  
The present note is restricted to the Laplacian, continuous boundary data with
a finite-energy extension, and a very elementary $H^{1}$-based argument.


\section*{Acknowledgements}
This work was supported by NSERC Discovery Grants Program.


\begin{thebibliography}{10}


\bibitem{Hilbert1904}
D.\thinspace Hilbert,
\emph{\"Uber das Dirichletsche Prinzip},
Math.~Ann.~\textbf{59} (1904), 161--186;
reprinted in \emph{Gesammelte Abhandlungen}, Vol.~3, Springer, Berlin, 1935, pp.~15--37.

\bibitem{Sobolev1938}
S.\thinspace L.\thinspace Sobolev,
\emph{On a theorem of functional analysis} (Russian),
Mat.~Sb.~\textbf{4} (1938), no.~3, 471--497;
English translation in Amer.~Math.~Soc.~Transl. (2) \textbf{34} (1963), 39--68.

\bibitem{WeylProjection}
H.\thinspace Weyl,
\emph{The method of orthogonal projection in potential theory},
Duke Math.~J.~\textbf{7} (1940), 411--444.

\bibitem{Poincare1890}
H.\thinspace Poincar\'e,
\emph{Sur les \'equations aux d\'eriv\'ees partielles de la physique math\'ematique},
Amer. J. Math. \textbf{12} (1890), 211--294.

\bibitem{Lebesgue1912}
H.\thinspace Lebesgue,
\emph{Sur le probl\`eme de Dirichlet},
{C.~R. Acad. Sci. Paris} \textbf{154} (1912), 335--337.


\bibitem{Perron1923}
O.\thinspace Perron,
\emph{Eine neue Behandlung der ersten Randwertaufgabe f\"ur $\Delta u = 0$},
{Math. Z.} \textbf{18} (1923), 42--54.

\bibitem{WienerDirichlet}
N.\thinspace Wiener,
\emph{The Dirichlet problem},
{J. Math. Phys.} \textbf{3} (1924), 127--146.


\bibitem{LSW}
W.\thinspace Littman, G.\thinspace Stampacchia and H.\thinspace F.\thinspace Weinberger,
\emph{Regular points for elliptic equations with discontinuous coefficients},
Ann. Scuola Norm. Sup. Pisa (3) \textbf{17} (1963), 43--77.

\bibitem{Frostman}
O.\thinspace Frostman,
\emph{Les points irr\'eguliers dans la th\'eorie du potentiel et le crit\`ere de Wiener},
Kungl. Fysiogr. S\"allsk. Lund F\"orh. \textbf{9} (1939), 119--129.

\bibitem{HKM}
J.\thinspace Heinonen, T.\thinspace Kilpel\"ainen and O.\thinspace Martio,
\emph{Nonlinear Potential Theory of Degenerate Elliptic Equations},
Oxford Mathematical Monographs, Clarendon Press, Oxford, 1993.

\bibitem{Simader}
C.\thinspace G.\thinspace Simader,
\emph{Equivalence of weak Dirichlet's principle, the method of weak solutions and Perron's method towards classical solutions of Dirichlet's problem for harmonic functions},
Math. Nachr. \textbf{279} (2006), 415--430.

\bibitem{Hildebrandt}
S.\thinspace Hildebrandt,
\emph{On Dirichlet's principle and Poincar\'e's m\'ethode de balayage},
Math. Nachr. \textbf{278} (2005), 141--144.

\bibitem{ArendtDaners}
W.\thinspace Arendt and D.\thinspace Daners,
\emph{The Dirichlet problem by variational methods},
Bull. London Math. Soc. \textbf{40} (2008), 51--56.

\bibitem{Klimsiak}
T.\thinspace  Klimsiak,
\emph{Trace operator and the Dirichlet problem for elliptic equations on arbitrary bounded open sets},
J. Funct. Anal. \textbf{277} (2019), no.~5, 1499--1530.

\bibitem{ArendtElstSauter}
W.\thinspace Arendt, A.\thinspace F.\thinspace M.\thinspace ter Elst and M.\thinspace Sauter,
\emph{The Perron solution for elliptic equations without the maximum principle},
{Math. Ann.} \textbf{390} (2024), no.~2, 763--810.

\end{thebibliography}
\end{document}